\def\volume{\operatorname{vol}}
\def\op{\operatorname}
\def\svolball#1#2{{\volume(\underline B_{#2}^{#1})}}
\def\svolann#1#2{{\volume(\underline A_{#2}^{#1})}}
\def\svolsp#1#2{{\volume(\partial \underline B_{#2}^{#1})}}
\begin{document}

\newtheorem{Thm}{Theorem}[section]
\newtheorem{Def}{Definition}[section]
\newtheorem{Lem}[Thm]{Lemma}
\newtheorem{Rem}{Remark}[section]

\newtheorem{Cor}[Thm]{Corollary}
\newtheorem{sublemma}{Sub-Lemma}
\newtheorem{Prop}{Proposition}[section]
\newtheorem{Example}{Example}[section]
\newcommand{\g}[0]{\textmd{g}}
\newcommand{\pr}[0]{\partial_r}
\newcommand{\dif}{\mathrm{d}}
\newcommand{\bg}{\bar{\gamma}}
\newcommand{\md}{\rm{md}}
\newcommand{\cn}{\rm{cn}}
\newcommand{\sn}{\rm{sn}}
\newcommand{\seg}{\mathrm{seg}}

\newcommand{\Ric}{\mbox{Ric}}
\newcommand{\Iso}{\mbox{Iso}}
\newcommand{\ra}{\rightarrow}
\newcommand{\Hess}{\mathrm{Hess}}
\newcommand{\RCD}{\mathsf{RCD}}

\title{Almost maximal volume entropy rigidity for integral Ricci curvature in the non-collapsing case}
\author{Lina Chen}
\address[Lina Chen]{Department of mathematics, Nanjing University, Nanjing China}

\email{chenlina\_mail@163.com}
\thanks{Chen partially supported by the NSFC 12001268 and a research fund from Nanjing University.} 

\maketitle

\begin{abstract}

\setlength{\parindent}{10pt} \setlength{\parskip}{1.5ex plus 0.5ex
minus 0.2ex} 
In this note we will show the almost maximal volume entropy rigidity for manifolds with lower integral Ricci curvature bound in the non-collapsing case:
Given $n, d, p>\frac{n}{2}$, there exist $\delta(n, d,  p), \epsilon(n, d,  p)>0$, such that for $\delta<\delta(n, d,  p)$, $\epsilon<\epsilon(n, d, p)$, if a compact $n$-manifold $M$ satisfies that the integral Ricci curvature has lower bound $\bar k(-1, p)\leq \delta$, the diameter $\op{diam}(M)\leq d$ and volume entropy $h(M)\geq n-1-\epsilon$, then the universal cover of $M$ is Gromov-Hausdorff close to a hyperbolic space form $\Bbb H^k$, $k\leq n$; If in addition the volume of $M$, $\volume(M)\geq v>0$, then $M$ is diffeomorphic and Gromov-Hausdorff close to a hyperbolic manifold where $\delta, \epsilon$ also depends on $v$.
  \end{abstract}

 \section{Introduction}
 
 A Riemannian $n$-manifold $M$  has lower integral Ricci curvature bound if there are constants $p>\frac{n}{2}, R>0, H$ such that
 $$\bar k(H ,p, R)=\sup_{x\in M}\left(\frac1{\volume(B_R(x))}\int_{B_R(x)} \rho_H^p dv\right)^{\frac{1}{p}} =\sup_{x\in M}\left(-\kern-1em\int_{B_R(x)} \rho_H^p dv\right)^{\frac{1}{p}}$$
 has an upper bound, where $\rho_H=\max\{-\rho(x)+(n-1)H, 0\}$ and $\rho\left( x\right) $ is the smallest
eigenvalue for the Ricci tensor $\op{Ric} : T_xM\to T_xM$. If $R=\op{diam}(M)$, the diameter of $M$, we also denote $\bar k(H, p, R)=\bar k(H, p)$.
Compared with manifolds with lower Ricci curvature bound, many basic properties and results have been generalized to manifolds with lower integral Ricci curvature bound. For instance, the  Laplacian comparison \cite{PW1, Au1}, relative volume comparison \cite{PW1,CW} and almost splitting theorem, almost metric cone rigidity \cite{PW2, TZ, Ch2}. 

In this note, we will study the almost maximal volume entropy rigidity for compact manifolds with lower integral Ricci curvature bound.

Recall that for a compact manifold $M$, the volume entropy of $M$ is defined as 
$$h(M)=\lim_{R\to \infty}\frac{\ln\volume(B_R(\tilde x))}{R},$$
where $\tilde x\in \tilde M$, the universal cover of $M$ and the limit always exists and independent of $\tilde x$ \cite{Ma}.

For a compact $n$-manifold $M$ with lower Ricci curvature bound $\op{Ric}_M\geq -(n-1)$, the maximal volume entropy rigidity says that  $h(M)\leq n-1$ (by Bishop volume comparison) and when $h(M)$ obtains the maximal $h(M)=n-1$, $M$ is isometric to a hyperbolic manifold \cite{LW}. And the almost maximal volume entropy rigidity says that for $\op{diam}(M)\leq d$,  when $h(M)$ is close to $n-1$, $M$ is diffeomorphic and Gromov-Hausdorff close to a hyperbolic manifold \cite{CRX}. For non-smooth metric measure spaces with some curvature dimension condition ($\RCD$ or Alexandrov), there are also some rigidity results about volume entropy taking maximal or almost maximal \cite{CDNPSW, Ji, Ch1}. 

For compact manifolds $M$ with lower integral Ricci curvature bound, $\bar k(-1, p)\leq \delta<\delta_0(n, p, d)$,  in \cite{CW}, with G. Wei, we gave an optimal estimate of volume entropy, 
$$h(M)\leq n-1+c(n, p,d)\delta^{\frac12}, \quad d=\op{diam}(M).$$ 

In this note we will show that for integral Ricci curvature when $h(M)$ is close to $n-1$, the universal cover $\tilde M$ is Gromov-Hausdorff close to a hyperbolic space form $\Bbb H^k$, $k\leq n$:
\begin{Thm}
Given $n, d, p>\frac{n}{2}$, there exist $\delta(n, d,  p), \epsilon(n, d,  p)>0$, such that for $0< \delta<\delta(n, d,  p)$, $0<\epsilon<\epsilon(n, d, p)$, if a compact $n$-manifold $M$ satisfies that
$$\op{diam}(M)\leq d, \quad \bar k(-1, p)\leq \delta, \quad h(M)\geq n-1-\epsilon,$$
then $\tilde M$ is $\Psi(\delta, \epsilon | n, d, p)$-Gromov-Hausdorff close to a simply connected hyperbolic space form $\Bbb H^k$, $k\leq n$, where $\Psi\to 0$ when $\delta, \epsilon\to 0$ and $n, d, p$ fixed.
\end{Thm}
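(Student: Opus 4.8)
\emph{Proof sketch.} The plan is to argue by contradiction through an equivariant Gromov--Hausdorff compactness argument, reducing the statement to the \emph{exact} maximal volume entropy rigidity for limit spaces carrying a synthetic lower Ricci bound. Suppose the conclusion fails. Then there are $\eta_0>0$ and a sequence of compact $n$-manifolds $M_i$ with $\op{diam}(M_i)\le d$, $\bar k_i(-1,p)\to 0$ and $h(M_i)\to n-1$, whose universal covers $\tilde M_i$ are not $\eta_0$-Gromov--Hausdorff close to any simply connected hyperbolic space form $\Bbb H^k$, $k\le n$. Fixing base points $\tilde x_i\in\tilde M_i$ and writing $\Gamma_i=\pi_1(M_i)$ for the deck group, I would first pass to a limit of the triples $(\tilde M_i,\Gamma_i,\tilde x_i)$ in the pointed equivariant Gromov--Hausdorff topology. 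The precompactness needed here is supplied by the integral relative volume comparison of \cite{PW1, CW}: under $\bar k_i(-1,p)\le\delta$ and the diameter bound it yields a uniform doubling estimate on balls of $\tilde M_i$ (a covering is locally isometric to $M_i$, so the small-scale integral bound persists), hence subconvergence $(\tilde M_i,\Gamma_i,\tilde x_i)\to (Y,\Gamma_\infty,y)$ with $M_i\to Y/\Gamma_\infty$.

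The next step is to identify the limit. Because $\bar k_i(-1,p)\to 0$, the $M_i$ have an asymptotically genuine lower Ricci bound, and the almost metric cone and almost splitting theory for integral Ricci curvature \cite{PW2, TZ, Ch2} shows that $Y$ carries a synthetic lower Ricci bound, i.e. it is an $\RCD(-(n-1),n)$ space. In particular $Y$ satisfies the sharp Bishop--Gromov volume comparison against $\Bbb H^n$, so the volume of metric balls in $Y$ grows at most at the hyperbolic rate, giving $h(Y)\le n-1$, where $h(Y)$ denotes the volume entropy defined through the $\Gamma_\infty$-action on $Y$.

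The heart of the argument is to transfer the entropy lower bound to the limit: I would show that $h(M_i)\ge n-1-\epsilon_i$ forces $h(Y)\ge n-1$, so that $h(Y)=n-1$ is exactly maximal. Combined with $h(Y)\le n-1$, the exact maximal volume entropy rigidity for $\RCD$ spaces \cite{CDNPSW, Ch1} then identifies $Y$ isometrically with a simply connected hyperbolic space form $\Bbb H^k$ for some $k\le n$, the dimension drop $k<n$ accounting for collapse of the universal covers. Since $(\tilde M_i,\tilde x_i)\to (Y,y)=(\Bbb H^k,y)$ in the pointed Gromov--Hausdorff sense, $\tilde M_i$ is eventually $\eta_0$-close to $\Bbb H^k$, contradicting the choice of the sequence; the quantitative statement with $\Psi\to 0$ then follows by the standard contradiction/compactness formulation.

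The main obstacle is this middle step, namely propagating the asymptotic, global quantity $h$ to the pointed limit. The volume entropy is an asymptotic invariant of $\tilde M_i$, while pointed Gromov--Hausdorff convergence is only local, so one cannot directly interchange $\lim_{R\to\infty}$ with $\lim_i$. The remedy I expect to use is the almost-monotonicity of the normalized ratio $\volume(B_R(\tilde x_i))/V_{-1}(R)$ coming from \cite{CW}: the pinching $n-1-\epsilon_i\le h(M_i)\le n-1+c\,\delta_i^{1/2}$ forces this ratio to be almost constant across a long range of scales with error $\Psi(\delta_i,\epsilon_i)$, and this scale-uniform almost-equality is exactly what survives in the pointed limit and yields maximal volume growth of $Y$ on \emph{every} ball. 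Controlling the interplay of this ratio with the $\Gamma_\infty$-action, so that the growth seen in $Y$ matches the entropy, and ruling out loss of volume under possible collapse, is the delicate point, and is where the integral relative volume comparison and the equivariant structure must be used together.
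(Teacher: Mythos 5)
Your outline sets up the contradiction/equivariant-compactness framework correctly and correctly identifies the limit as an $\RCD(-(n-1),n)$ space, but the step you yourself flag as the heart of the argument, transferring the entropy lower bound to the limit, is a genuine gap, and the mechanism you propose for it does not work. The hypothesis $h(M_i)\ge n-1-\epsilon_i$ controls $\volume(B_R(\tilde x_i))$ only at scales $R\ge R_0(i)$, with $R_0(i)$ not uniform in $i$, and both entropy bounds in the pinching $n-1-\epsilon_i\le h(M_i)\le n-1+c\,\delta_i^{1/2}$ are purely asymptotic statements; they do \emph{not} force the ratio $\volume(B_R(\tilde x_i))/\svolball{-1}{R}$ to be almost constant over any controlled range of scales, let alone at fixed scales near the basepoint. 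All that can be extracted is \cite[Lemma 4.2]{CRX}, i.e.\ \eqref{alm-vol-2}: the \emph{existence} of good scales $L_j\to\infty$, depending on the manifold, at which the annulus growth $\volume(\partial B_{L_j+D}(\tilde x_i))/\volume(\partial B_{L_j}(\tilde x_i))$ is almost maximal. Note that if your almost-constancy claim did hold down to fixed scales, it would give $\volume(B_r(\tilde x_i))\ge(1-\Psi)\svolball{-1}{r}$ for every fixed $r$, which forces non-collapse and $Y=\Bbb H^n$ outright by volume convergence and almost rigidity; that is strictly stronger than the theorem (which allows $k<n$ precisely because collapse is not excluded by the hypotheses), so it cannot be a consequence of the entropy pinching. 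The invocation of \cite{CDNPSW} is inconsistent for the same reason: its equality case produces an $n$-dimensional hyperbolic manifold, so there is no ``dimension drop accounting for collapse''; in a collapsed limit the fixed-scale volume ratios tend to $0$, the (renormalized) measure growth of the limit need not inherit the entropy lower bound, and $h(Y)\ge n-1$ is exactly what cannot be established by interchanging $\lim_{R\to\infty}$ with $\lim_i$. (There is also a secondary issue: $Y$ is a limit of universal covers, which need not be the universal cover of $\lim M_i$, while \cite{CDNPSW} concerns compact spaces and their universal covers.)

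The paper's proof is shaped by precisely these obstructions and proceeds differently: the entropy hypothesis together with the integral relative volume comparison (Claims 1--3 in the proof of Theorem~\ref{warp-pro}, which substitute for the missing comparison between two arbitrary large annuli in the integral curvature setting) yields a Cheeger--Colding type almost warped product structure on definite balls $B_\rho(q_j)$ inside the far-away good annuli $A_{L_j,L_j+D}(\tilde x_i)$, via an approximate solution $\tilde f$ of $\Delta\tilde f=ne^u$ with controlled Hessian. The diameter bound then enters in an essential way that is absent from your outline: since $\op{diam}(M_i)\le d$, the deck group $\Gamma_i$ has a fundamental domain of diameter $\le d$, so some $\gamma_{ij}\in\Gamma_i$ translates such a ball over the basepoint, giving $d_{GH}\bigl(B_{\rho/2}(\tilde x_i),B_{\rho/2}((0,y_i))\bigr)\le\Psi$ with $B_{\rho/2}((0,y_i))\subset\Bbb R\times_{e^s}Y_i$. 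Letting $L_j\to\infty$ and then $i\to\infty$ gives the global splitting $\tilde X=\Bbb R\times_{e^s}Y$ of the limit, and $\tilde X=\Bbb H^k$, $k\le n$, then follows from the regular-point argument of \cite[Lemma 4.4]{CRX}, not from an entropy rigidity theorem applied to the limit. It is this translation-by-deck-transformations device that converts information at uncontrolled large scales into structure at the basepoint; without it, or some replacement for it, your interchange of limits has no justification.
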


By this result, it is easy to derive that in the non-collapsing case, i.e., $\volume(M)\geq v>0$, the almost maximal volume entropy rigidity holds:

 \begin{Thm}
Given $n, d, v>0, p>\frac{n}{2}$, there exist $\delta(n, d, v, p), \epsilon(n, d, v, p)>0$, such that for $0< \delta<\delta(n, d, v, p)$, $0<\epsilon<\epsilon(n, d, v, p)$, if a compact $n$-manifold $M$ satisfies that
$$\op{diam}(M)\leq d, \quad \bar k(-1, p)\leq \delta, \quad \volume(M)\geq v,\quad h(M)\geq n-1-\epsilon,$$
then $M$ is diffeomorphic to a hyperbolic $n$-manifold by a $\Psi(\delta, \epsilon | n, d, v, p)$-isometry.
\end{Thm}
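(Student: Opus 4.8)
The plan is to bootstrap from Theorem 1.1, which already places $\tilde M$ within Gromov--Hausdorff distance $\Psi$ of a hyperbolic space form $\Bbb H^k$, and to use the non-collapsing hypothesis $\volume(M)\geq v$ first to force $k=n$ and then to promote metric closeness to a diffeomorphism. First I would lift the volume lower bound to the universal cover: since $\op{diam}(M)\leq d$, the projection $\pi\colon\tilde M\to M$ restricted to any ball $B_d(\tilde x)$ is surjective, and as $\pi$ is a local isometry this gives $\volume(B_d(\tilde x))\geq \volume(M)\geq v$, so $\tilde M$ is non-collapsed at scale $d$ with a bound depending only on $v$. Combined with $\bar k(-1,p)\leq\delta$ and the relative volume comparison of \cite{PW1,CW}, this yields two-sided volume bounds on all balls up to scale $d$ in $\tilde M$. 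I then invoke the volume convergence theorem for integral Ricci curvature: if, as $\delta,\epsilon\to 0$, the spaces $\tilde M$ converged to $\Bbb H^k$ with $k<n$, the $n$-dimensional volume of a fixed ball would tend to zero, contradicting the uniform lower bound $v$. Hence $k=n$ and $\tilde M$ is $\Psi$-close to $\Bbb H^n$.

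Next, with the limit smooth and the convergence non-collapsed, I would apply the $\epsilon$-regularity and harmonic-radius estimates available under integral Ricci bounds \cite{PW2,TZ,Ch2}: at small scales $\Bbb H^n$ is $\Psi$-close to $\Bbb R^n$, so every unit ball of $\tilde M$ is almost Euclidean and non-collapsed, forcing a uniform lower bound on the harmonic radius together with $W^{2,p}$, hence $C^{1,\alpha}$, control of the metric. This lets the Gromov--Hausdorff approximation from $\tilde M$ to $\Bbb H^n$ be taken to be a diffeomorphism on balls of definite size, close to an isometry. Carrying the deck action $\Gamma=\pi_1(M)$ along via equivariant Gromov--Hausdorff convergence produces a limit isometric action of a group $G$ on $\Bbb H^n$, with $M=\tilde M/\Gamma$ converging to $\Bbb H^n/G$.

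It then remains to identify $\Bbb H^n/G$ as a closed hyperbolic manifold. The non-collapsing bound provides a uniform injectivity-radius/Margulis-type lower bound, ruling out arbitrarily short geodesic loops and torsion in the limit, so that $G\subset\Iso(\Bbb H^n)$ acts freely, properly discontinuously, and cocompactly (cocompactness from $\op{diam}(M)\leq d$); hence $N:=\Bbb H^n/G$ is a compact hyperbolic $n$-manifold. Pushing the equivariant almost-isometric diffeomorphism down to the quotient yields a $\Psi$-isometry of $M$ onto $N$ that is a diffeomorphism, which is the assertion.

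The main obstacle is the last two steps. Upgrading Gromov--Hausdorff closeness to a genuine diffeomorphism requires the full $\epsilon$-regularity and smoothing machinery for integral Ricci curvature, which is weaker than the pointwise theory and where the uniform constants must be tracked carefully in $n,d,p,v$. Showing that the limit group $G$ acts freely and discretely, so that $\Bbb H^n/G$ is a manifold rather than an orbifold, needs a Margulis-type short-loop estimate under integral curvature bounds, and it is precisely here that the non-collapsing hypothesis $\volume(M)\geq v$ is indispensable.
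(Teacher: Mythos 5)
Your overall skeleton matches the paper's: lift the volume bound to $\tilde M$, use volume convergence to force $k=n$ (hence $G$ discrete), carry the deck action through equivariant Gromov--Hausdorff convergence, show the limit group acts freely so that $X=\Bbb H^n/G$ is a hyperbolic manifold, and then upgrade Gromov--Hausdorff closeness to a diffeomorphism via the non-collapsed convergence theory for integral Ricci curvature \cite{PW2} (the paper runs this as a contradiction argument with sequences $\delta_i,\epsilon_i\to 0$, but that is only a stylistic difference, and your lifting of the volume bound and the $k=n$ step are correct).

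The genuine gap is your freeness step. You assert that the non-collapsing bound ``provides a uniform injectivity-radius/Margulis-type lower bound, ruling out arbitrarily short geodesic loops and torsion in the limit.'' This is false as stated and, as used, circular: a lower volume bound together with a diameter bound and a (pointwise or integral) lower Ricci bound does \emph{not} control the injectivity radius --- non-collapsed sequences with Ricci bounded below can develop orbifold-type singularities, with geodesic loops whose length tends to zero, which is exactly the scenario in which the limit group $G$ would have a torsion element with a fixed point and $\Bbb H^n/G$ would be an orbifold rather than a manifold. Conversely, a uniform short-loop bound on $M_i$ is essentially \emph{equivalent} to a uniform positive lower bound on the displacement of nontrivial elements of $\Gamma_i$, i.e., to the freeness of the limit action you are trying to prove; no Margulis lemma under integral curvature bounds is available in the cited literature to break this circle. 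The paper fills precisely this hole by the argument of \cite[Theorem 2.1]{CRX}: an isotropy point of $G$ is excluded by combining the volume convergence theorem of \cite{PW2} with the almost splitting theorem and almost metric cone rigidity for integral Ricci curvature \cite{PW2, TZ, Ch2}, which compare the local volume density of $X$ at a would-be fixed point (strictly below the manifold density because of the finite isotropy) with the density forced by non-collapsed convergence from the smooth manifolds $M_i$. Without this (or an equivalent) argument, your proof only yields that $M$ is close to a hyperbolic \emph{orbifold}, which is strictly weaker than the assertion. Your final regularity step (harmonic radius/$\epsilon$-regularity to promote closeness to a diffeomorphism) is sound in spirit, though the paper obtains it more directly by applying the stability theory of \cite{PW2} to the compact manifolds $M_i$ converging to the compact smooth limit $X$, rather than working equivariantly on the universal covers.
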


The proofs of our results is similar as the one of \cite[Theorem D]{CRX}. The key part is using the almost maximal volume entropy condition 
$$h(M)\geq n-1-\epsilon,$$
and relative volume comparison to derive that for each fixed $\rho>2d$, $B_{\rho}(\tilde x)\subset \tilde M$ is Gromov-Hausdorff close to a ball in a warped product space
$\Bbb R\times_{e^r}Y$ where $Y$ is a connected length space. Then the same argument as \cite[Lemma 4.4]{CRX} gives that $\tilde M$ is Gromov-Hausdorff close to the hyperbolic manifold $\Bbb H^k$.

A difficulty here in the key part is that we do not have effective relative volume comparison between two arbitrary large annuluses in the integral Ricci curvature case.  However we can use the almost maximal volume entropy condition to supply all the controls we need (see \eqref{alm-vol-1} and \eqref{alm-vol-2} and Claim1-3 of the proof of Theorem~\ref{warp-pro}). 

The author would like to thank Professor Guofang Wei for the recommendation of the topic of this note.

\section{Preliminaries}
In this section, we will give some properties we need for manifolds with integral Ricci curvature bound.
For a complete $n$-manifold $M$, $x\in M$,  let $r=d(x, \cdot)$ be the distance function from $x$ and let 
$\psi=\max\{\Delta r-\underline{\Delta}_H r, 0\}$, where $\underline{\Delta}_H$ is the Laplacian operator in the simply connected space $\underline{M}_H^n$ with constant sectional curvature $H$. Then $\psi=0$ if $\op{Ric}_M\geq (n-1)H$. 
\begin{Thm}  \label{com-int}
Given $n, p> \frac{n}{2}$,$R>0, H$, for a complete $n$-manifold $M$, fix $x \in M$, 
then the following holds:

{\rm (2.1.1)} Laplacian comparison estimates \cite[Lemma 2.2]{PW1}: 
\begin{equation}
-\kern-1em\int_{B_R(x)}\psi \leq  c(n, p) \bar k^{\frac12}(H, p, R), \quad  c(n, p)= \left(\frac{(n-1)(2p-1)}{2p-n}\right)^{\frac{1}{2}}; \label{lap-com}
\end{equation}

{\rm (2.1.2)} Relative volume comparison  \cite{PW1, CW}:
There exist $\delta_0=\delta(n, p, H), c=c(n, p, H)$, such that if $\bar k(H, p, 1)\leq \delta\leq \delta_0$, then for each $0<r<R$, $R<\frac{\pi}{\sqrt H}$ for $H>0$,
 \begin{equation}\frac{\volume(\partial B_R(x))}{\volume(B_R(x))}\leq \frac{\svolsp{H}{R}}{\svolball{H}{R}}+c\bar k^{\frac12}(H, p, R), \label{rel-vol-1}\end{equation}
 \begin{equation}
 \frac{\volume(A_{r, R}(x))}{\svolann{H}{r, R}}\leq \frac{\volume(B_r(x))}{\svolball{H}{r}}\left(1+\left(e^{(\max\{R, 1\}-r)c\delta^{\frac12}}-1\right)\frac{\svolball{H}{R}}{\svolann{H}{r, R}}\right), \label{rel-vol-2}
 \end{equation}
 \begin{equation}
 \frac{\volume(B_R(x))}{\svolball{H}{R}}\leq e^{(\max\{R, 1\}-r)c\delta^{\frac12}}\frac{\volume(B_r(x))}{\svolball{H}{r}}. \label{rel-vol-3}
 \end{equation}

\end{Thm}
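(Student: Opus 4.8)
The plan is to derive both parts from a single pointwise Riccati inequality for $\psi$ and then to integrate it in two different ways. Fix $x$ and work in geodesic polar coordinates, writing the volume element as $\mathcal A(r,\theta)\,\dif r\,\dif\theta$ with $\partial_r\ln\mathcal A=\Delta r$ wherever $r$ is smooth, while in $\underline M_H^n$ the model element $\underline{\mathcal A}_H(r)$ obeys $\partial_r\ln\underline{\mathcal A}_H=\underline\Delta_H r$. The Bochner formula applied to $r=d(x,\cdot)$ gives $\partial_r\Delta r+\tfrac1{n-1}(\Delta r)^2\le-\Ric(\partial_r,\partial_r)$, while the model relation is the corresponding equality with right-hand side $-(n-1)H$. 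Subtracting, writing $u=\Delta r-\underline\Delta_H r$, and using $\Ric(\partial_r,\partial_r)\ge(n-1)H-\rho_H$, one finds that on the set where $u>0$ (so $\psi=u$)
\begin{equation*}
\partial_r\psi+\frac{2\,\underline\Delta_H r}{n-1}\,\psi+\frac{\psi^2}{n-1}\le\rho_H .
\end{equation*}
This is the inequality that drives everything; the quadratic term is the source of the square root visible in $c(n,p)$ and in the exponents $\bar k^{1/2},\delta^{1/2}$.

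For (2.1.1) the plan is to multiply this inequality by $(n-1)\psi^{2p-1}\ge0$ and integrate over $B_R(x)$ against $\mathcal A\,\dif r\,\dif\theta$. Integrating $\psi^{2p-1}\partial_r\psi$ by parts in $r$ produces $-\tfrac1{2p}\int\psi^{2p}\,\partial_r\mathcal A$, and since $\partial_r\ln\mathcal A=\underline\Delta_H r+\psi$ on the support of $\psi$ this recombines with the linear term; the inner boundary contributes nothing because $\psi\to0$ as $r\to0$, and the cut-locus contributions carry a favorable sign by the Calabi barrier argument. After discarding the nonnegative terms weighted by $\underline\Delta_H r$ (whose sign is what forces the range $R<\pi/\sqrt H$ when $H>0$) and applying Hölder and Young with exponents tuned to $p$ to absorb the resulting power of $\psi$, one arrives at the $L^{2p}$ bound
\begin{equation*}
\left(-\kern-1em\int_{B_R(x)}\psi^{2p}\right)^{1/2p}\le c(n,p)\,\bar k^{\frac12}(H,p,R),\qquad c(n,p)=\left(\frac{(n-1)(2p-1)}{2p-n}\right)^{\frac12};
\end{equation*}
here $2p>n$ is exactly the condition making the coefficients positive and the conjugate exponent admissible. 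The stated bound \eqref{lap-com} then follows from $-\kern-1em\int_{B_R}\psi\le\big(-\kern-1em\int_{B_R}\psi^{2p}\big)^{1/2p}$ by Hölder on the normalized measure.

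For (2.1.2) the mechanism is to integrate the Laplacian comparison rather than to differentiate it. From $\partial_r\ln(\mathcal A/\underline{\mathcal A}_H)=\Delta r-\underline\Delta_H r\le\psi$ one gets, for $r_1<r_2$ along each fixed direction, the almost-monotonicity
\begin{equation*}
\frac{\mathcal A(r_2,\theta)}{\underline{\mathcal A}_H(r_2)}\le\frac{\mathcal A(r_1,\theta)}{\underline{\mathcal A}_H(r_1)}\,\exp\!\Big(\int_{r_1}^{r_2}\psi\,\dif r\Big),
\end{equation*}
the exact analogue of Bishop--Gromov with an exponential defect governed by the radial integral of $\psi$. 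Integrating this ratio over the directions and over the radial variable, and invoking (2.1.1) together with Hölder to replace the pointwise radial integrals of $\psi$ by the average $c\,\bar k^{1/2}\le c\,\delta^{1/2}$, produces \eqref{rel-vol-1}--\eqref{rel-vol-3}; the factor $e^{(\max\{R,1\}-r)c\delta^{1/2}}$ is precisely the integrated exponential defect, and the normalization $\max\{R,1\}$ reflects that the hypothesis is phrased at unit scale $\bar k(H,p,1)$, so the estimate of (2.1.1) must be rescaled to the radius in play.

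I expect the genuine difficulty to lie in the passage from pointwise to integral bounds, and it is twofold. First, $r$ is not smooth on the cut locus, so $\psi$ is only a barrier (weak) subsolution; every integration by parts must be set up so that the singular set contributes with the correct sign rather than spuriously, which is the standard but delicate point of the Petersen--Wei scheme. Second, and more substantively for the relative comparison, the exponential defect depends on $\int_{r_1}^{r_2}\psi\,\dif r$ along individual geodesics, so the $L^{2p}$ control of $\psi$ cannot be used ray by ray; converting the directional integrals into the global average $\bar k^{1/2}$ requires a careful Hölder argument coupled to the rescaling to unit scale, and it is there that the constants $c(n,p,H)$ and the threshold $\delta_0(n,p,H)$ are produced.
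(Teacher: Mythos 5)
The paper itself gives no proof of Theorem~\ref{com-int}; it is quoted from \cite{PW1} (Lemma~2.2 there) and \cite{CW}, so your attempt has to be measured against those cited arguments. Your starting Riccati inequality $\partial_r\psi+\frac{2\underline\Delta_H r}{n-1}\psi+\frac{\psi^2}{n-1}\le\rho_H$ on $\{\psi>0\}$ is correct and is indeed the engine of the Petersen--Wei scheme, but your choice of multiplier in (2.1.1) is off by one power of $\psi$, and this is not cosmetic: carrying out exactly the manipulation you describe (multiply by $(n-1)\psi^{2p-1}\mathcal A$, integrate $\psi^{2p-1}\partial_r\psi$ by parts, recombine with $\partial_r\ln\mathcal A=\underline\Delta_H r+\psi$, drop the boundary and the $\underline\Delta_H r$ terms) leaves
\begin{equation*}
\frac{2p-n+1}{2p}\int_0^R\psi^{2p+1}\mathcal A\,\dif r\;\le\;(n-1)\int_0^R\rho_H\,\psi^{2p-1}\mathcal A\,\dif r,
\end{equation*}
and no choice of H\"older or Young exponents closes this against $\bar k(H,p,R)$: pairing $\rho_H$ with exponent $p$ forces the power $\psi^{(2p-1)p/(p-1)}$, which exceeds $2p+1$, while Young's inequality produces $\rho_H^{(2p+1)/2}$, i.e.\ an estimate of the normalized $L^{2p+1}$ norm of $\psi$ by $\bar k(H,p+\tfrac12,R)^{1/2}$ --- a stronger hypothesis than the theorem grants. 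The cited proof multiplies instead by $(2p-1)\psi^{2p-2}\mathcal A$, i.e.\ computes $(\psi^{2p-1}\mathcal A)'$; then the self-improving term is $\frac{2p-n}{n-1}\psi^{2p}\mathcal A$, a single H\"older with the conjugate pair $\left(p,\frac{p}{p-1}\right)$ closes the loop, and this is exactly what generates the stated constant $\left(\frac{(n-1)(2p-1)}{2p-n}\right)^{1/2}$: the numerator $2p-1$ is the multiplier exponent and the denominator $2p-n$ is $(n-1)\left(\frac{2p-1}{n-1}-1\right)$. Your normalization can never produce this constant or this norm.

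For (2.1.2) your plan --- exponentiate the defect along each ray to get $\mathcal A(r_2,\theta)/\underline{\mathcal A}_H(r_2)\le \left(\mathcal A(r_1,\theta)/\underline{\mathcal A}_H(r_1)\right)\exp\left(\int_{r_1}^{r_2}\psi\,\dif r\right)$ and then average over directions --- has the order of operations backwards, and the difficulty you flag at the end is fatal rather than technical. With only an $L^{2p}$ bound on $\psi$ over the ball, the directional average of $\exp\left(\int\psi\,\dif r\right)$ is simply not controlled: concentrating $\psi$ on a cone of directions of measure $e^{-T/2}$ with $\int\psi\,\dif r\approx T$ along those rays keeps the normalized $L^{2p}$ norm of $\psi$ arbitrarily small while the average of the exponential is of size $e^{T/2}$; Jensen's inequality goes the wrong way, and no H\"older argument repairs this. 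The proofs in \cite{PW1, CW} linearize \emph{before} averaging: one integrates $\partial_r\left(\mathcal A/\underline{\mathcal A}_H\right)\le\psi\,\mathcal A/\underline{\mathcal A}_H$ over directions to obtain a differential inequality for $\volume(\partial B_r(x))/\svolsp{H}{r}$ whose error term is the sphere integral of $\psi$ over the model area, bounds those error integrals through H\"older and (2.1.1) by $c\,\bar k^{\frac12}$ times volumes --- this yields \eqref{rel-vol-1} --- and only then integrates the resulting bound $\frac{\dif}{\dif r}\ln\left(\volume(B_r(x))/\svolball{H}{r}\right)\le c\,\bar k^{\frac12}$ in $r$, which is where the Gronwall factors $e^{(\max\{R,1\}-r)c\delta^{1/2}}$ of \eqref{rel-vol-2}, \eqref{rel-vol-3} come from (the $\max\{R,1\}$ reflecting the rescaling of $\bar k(H,p,1)$ to other radii, as in Lemma~\ref{pack}). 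Without this reordering, both halves of your argument for (2.1.2) remain open.
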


If $M$ has integral Ricci curvature bound, then by \cite{Au2}, the universal cover  $\tilde M$ also has integral Ricci curvature bound:
\begin{Thm}\cite[Lemma 1.0.1]{Au2} \label{univ-cover}
Given $n, p>\frac{n}{2}, d>0, H$, there is $\delta=\delta(n, p, d, H)>0,\ c(n, H, d)>0$ such that if a compact Riemannian $n$-manifold $M$ satisfies that
$$\bar k(H, p)\leq \delta, \quad \op{diam}(M)\leq d,$$
then for any $\tilde x\in \tilde M$, the universal cover of $M$, and for any $R\geq 3d$, 
\begin{equation}
\left(-\kern-1em\int_{B_R(\tilde x)}\rho_H^p\right)^{\frac1p}\leq c(n, H, d)\, \bar k(H, p).  \label{univ-rho}
\end{equation}
\end{Thm}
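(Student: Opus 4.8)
The plan is to transfer the averaged curvature bound on $M$ up to $\tilde M$ through the covering projection $\pi\colon\tilde M\to M$, which is a local isometry and under which $\rho_H$ is the pull-back of the corresponding function on $M$; in particular $\rho_H$ is invariant under the deck group $\Gamma=\pi_1(M)$. For $y\in M$ set $N_R(y)=\#\bigl(\pi^{-1}(y)\cap B_R(\tilde x)\bigr)$, the number of lifts of $y$ lying in the ball $B_R(\tilde x)\subset\tilde M$. Fibrewise integration over $\pi$ then gives the two identities
\begin{equation*}
\int_{B_R(\tilde x)}\rho_H^p\,d\tilde v=\int_M\rho_H^p(y)\,N_R(y)\,dv(y),\qquad \volume(B_R(\tilde x))=\int_M N_R(y)\,dv(y).
\end{equation*}
Because $R=\op{diam}(M)$ in the definition of $\bar k(H,p)$ we have $B_R(x)=M$ and hence $\int_M\rho_H^p\,dv=\volume(M)\,\bar k(H,p)^p$; thus the problem is to compare the $\rho_H^p$-weighted integral of $N_R$ with the unweighted one.

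The next step is to show that $N_R$ barely depends on the base point, which is where $\op{diam}(M)\le d$ enters. Given $y_1,y_2\in M$, join them by a curve of length at most $d$ and lift it; by $\Gamma$-equivariance of lifting this sends distinct lifts of $y_1$ in $B_R(\tilde x)$ to distinct lifts of $y_2$ in $B_{R+d}(\tilde x)$, so that $N_R(y_1)\le N_{R+d}(y_2)$ for all $y_1,y_2$. Taking the maximum over $y_1$ and using that a minimum is at most an average, we get $\max_M N_R\le \volume(B_{R+d}(\tilde x))/\volume(M)$. Substituting this into the first identity and dividing by $\volume(B_R(\tilde x))$ yields the clean reduction
\begin{equation*}
-\kern-1em\int_{B_R(\tilde x)}\rho_H^p\,d\tilde v\ \le\ \frac{\volume(B_{R+d}(\tilde x))}{\volume(B_R(\tilde x))}\,\bar k(H,p)^p ,
\end{equation*}
so it suffices to bound the volume--growth ratio $\volume(B_{R+d}(\tilde x))/\volume(B_R(\tilde x))$ by some $c(n,H,d)^p$, uniformly for $R\ge 3d$.

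This last, uniform volume--growth estimate on $\tilde M$ is the step I expect to be the main obstacle. The natural tool is the relative volume comparison \eqref{rel-vol-3} applied on $\tilde M$, which would give $\volume(B_{R+d}(\tilde x))/\volume(B_R(\tilde x))\le e^{d\,c\,\delta^{1/2}}\,\svolball{H}{R+d}/\svolball{H}{R}$, and the model ratio $\svolball{H}{R+d}/\svolball{H}{R}$ is bounded by a constant depending only on $n,H,d$ once $R\ge 3d$ (for $H=-1$ it increases to $e^{(n-1)d}$ as $R\to\infty$). The difficulty is that invoking \eqref{rel-vol-3} on $\tilde M$ requires the scale-one bound $\bar k_{\tilde M}(H,p,1)\le\delta_0$, which is not given and whose derivation appears circular, since a naive multiplicity bound $\max_M N_1$ is not controlled when $M$ has short loops. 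I would resolve this by a self-improving argument: the matched scaling in the reduction above already shows that $\bar k_{\tilde M}(H,p,1)$ is governed by the small-scale ratio $\volume(B_{1+d}(\tilde x))/\volume(B_1(\tilde x))$ rather than by $\max_M N_1$ alone, and one bootstraps this from the relative volume comparison on the base $M$ (where $B_d(x)=M$ forces $\volume(B_1(x))/\volume(M)\ge c_0(n,H,d)>0$). Once $\bar k_{\tilde M}(H,p,1)\le\delta_0$ is in place, \eqref{rel-vol-3} on $\tilde M$ closes the estimate at all radii $R\ge 3d$ and the theorem follows. The essential point is that, in the integral setting, there is no direct two-sided comparison between far-separated large annuli, so the uniform window-growth control on the cover must be produced by this bootstrap rather than read off from a single comparison.
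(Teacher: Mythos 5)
First, a remark on the ground truth: the paper does not prove this statement at all --- it is imported verbatim from Aubry \cite{Au2} (Lemma 1.0.1), so the only ``proof'' in the paper is the citation, and your attempt has to be measured against Aubry's argument. Your first half is correct and is essentially the standard opening of that argument (Aubry phrases it with a Dirichlet fundamental domain $\mathcal D\subset \bar B_d(\tilde x)$ rather than your multiplicity function $N_R$, but the content is the same): unfolding the integral over the covering, using unique path lifting and $\op{diam}(M)\le d$ to get $\max_M N_R\le\min_M N_{R+d}\le \volume(B_{R+d}(\tilde x))/\volume(M)$, and hence
\begin{equation*}
\frac{1}{\volume(B_R(\tilde x))}\int_{B_R(\tilde x)}\rho_H^p \ \le\ \frac{\volume(B_{R+d}(\tilde x))}{\volume(B_R(\tilde x))}\,\bar k(H,p)^p .
\end{equation*}

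The gap is in the second half, and it is genuine. You correctly identify that everything hinges on the uniform bound $\volume(B_{R+d}(\tilde x))\le c(n,H,d)\,\volume(B_R(\tilde x))$ for $R\ge 3d$, and that invoking \eqref{rel-vol-3} on $\tilde M$ is circular, since its hypothesis $\bar k_{\tilde M}(H,p,1)\le\delta_0$ is precisely (a normalized form of) the conclusion \eqref{univ-rho}. But the ``self-improving argument'' you then offer is an assertion, not a mechanism. Concretely: applying your own reduction at scale $1$ and center $\tilde z$ gives $\volume(B_1(\tilde z))^{-1}\int_{B_1(\tilde z)}\rho_H^p\le \bigl(\volume(B_{1+d}(\tilde z))/\volume(B_1(\tilde z))\bigr)\bar k^p$, and the only tools you bring to bound this ratio --- the base comparison $\volume(B_1(z))\ge c_0(n,p,H,d)\volume(M)$ plus multiplicity counting --- yield $\volume(B_{1+d}(\tilde z))/\volume(B_1(\tilde z))\le \#\{\gamma\in\Gamma:\ d(\gamma\tilde z,\tilde z)\le 1+2d\}/c_0$. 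That orbit count is exactly the quantity that blows up under collapse (a flat torus of diameter $\varepsilon\to 0$ has $\#\Gamma_{1+2d}\sim\varepsilon^{-n}$ while the true volume ratio stays bounded), i.e.\ the short-loop obstruction you yourself flagged reappears unchanged one line later; nothing in your sketch breaks this circle. Note also that it cannot be broken with the toolkit quoted in this paper: every comparison stated here, \eqref{rel-vol-1}--\eqref{rel-vol-3}, presupposes a normalized curvature bound on the manifold to which it is applied. Aubry's actual proof runs through the original, unnormalized Petersen--Wei comparison \cite{PW1} (valid on any complete manifold, with error term $(\int_{B_R}\rho_H^p)^{1/2p}$ and no smallness hypothesis) combined with a genuine iteration over scales to defeat the a priori, manifold-dependent volume growth of $\tilde M$; this step is the actual content of the lemma, and in your proposal it is replaced by the sentence ``one bootstraps this,'' which is where the proof is missing.
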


By the relative volume comparison, the set of manifolds with integral Ricci curvature bound is precompact.
\begin{Thm}[\cite{PW1, Au2} Precompactness] \label{compact}
For $n\geq 2, p>\frac{n}{2}, H$, there exists $c(n, p, H)$ such that if a sequence of  compact Riemannian $n$-manifold $M_i$ satisfies that 
$\op{diam}(M)^2\bar{k_i}(H, p)\leq c(p, n,  H)$, then  there are subsequences of $\{(M_i, x_i)\}$ and $\{(\tilde M_i, \tilde x_i)\}$ that converge in the pointed Gromov-Hausdorff topology where $\tilde M_i$ is the universal cover of $M_i$.
\end{Thm}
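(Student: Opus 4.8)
The plan is to verify Gromov's (pointed) precompactness criterion: a family of pointed length spaces is precompact in the pointed Gromov--Hausdorff topology as soon as it is uniformly totally bounded, i.e.\ for every $R>0$ and $\varepsilon>0$ there is an integer $N(R,\varepsilon)$, independent of $i$, bounding the number of $\varepsilon$-balls needed to cover $B_R(x_i)$. Since the hypothesis $\op{diam}(M_i)^2\,\bar k_i(H,p)\le c(n,p,H)$ is scale invariant, I would first rescale each $M_i$ to unit diameter; for $c(n,p,H)$ small enough the rescaled manifolds then satisfy the normalization $\bar k_i(H',p,1)\le\delta_0$ required in \eqref{rel-vol-2}--\eqref{rel-vol-3}, so the relative volume comparison of Theorem~\ref{com-int} is available with constants depending only on $n,p,H$.

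The covering bound comes from a standard packing argument fed by \eqref{rel-vol-3}. Let $\{y_j\}_{j=1}^N\subset B_R(x_i)$ be a maximal $\varepsilon$-separated set, so that the balls $B_{\varepsilon/2}(y_j)$ are pairwise disjoint and lie in $B_{R+\varepsilon/2}(x_i)$, while the balls $B_\varepsilon(y_j)$ cover $B_R(x_i)$. Because $y_j\in B_R(x_i)$ forces $B_{R+\varepsilon/2}(x_i)\subset B_{2R+\varepsilon/2}(y_j)$, the volume monotonicity \eqref{rel-vol-3} applied at the center $y_j$ gives
\begin{equation*}
\frac{\volume(B_{R+\varepsilon/2}(x_i))}{\volume(B_{\varepsilon/2}(y_j))}\le\frac{\volume(B_{2R+\varepsilon/2}(y_j))}{\volume(B_{\varepsilon/2}(y_j))}\le e^{(\max\{2R+\varepsilon/2,\,1\}-\varepsilon/2)\,c\delta^{\frac12}}\,\frac{\svolball{H}{2R+\varepsilon/2}}{\svolball{H}{\varepsilon/2}}=:C(n,p,H,R,\varepsilon).
\end{equation*}
Summing over the disjoint balls, $N\cdot\min_j\volume(B_{\varepsilon/2}(y_j))\le\volume(B_{R+\varepsilon/2}(x_i))$, hence $N\le C(n,p,H,R,\varepsilon)$ uniformly in $i$. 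This is precisely uniform total boundedness, so $\{(M_i,x_i)\}$ is precompact.

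For the universal covers the only missing ingredient is a relative volume comparison on $\tilde M_i$, and for that one needs an integral Ricci bound on $\tilde M_i$. This is supplied by Theorem~\ref{univ-cover}: under $\op{diam}(M_i)\le d$ and $\bar k(H,p)\le\delta$ small, the averaged $\rho_H^p$ over any ball $B_R(\tilde x_i)$ with $R\ge 3d$ is controlled by $c(n,H,d)\,\bar k(H,p)$; on balls of radius $<3d$ the bound is inherited directly from $M_i$ by local isometry. Matching the two ranges, $\tilde M_i$ satisfies a uniform integral curvature bound at every scale, \eqref{rel-vol-3} holds on $\tilde M_i$ with uniform constants, and the identical packing estimate produces a uniform $N(R,\varepsilon)$ for the balls $B_R(\tilde x_i)$. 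Since $\tilde M_i$ need not be compact, I would extract a convergent subsequence via the pointed form of Gromov's theorem.

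The main obstacle is securing the volume comparison on the cover with non-degenerating constants. On $M_i$ it is immediate, but passing to $\tilde M_i$ is delicate precisely because the curvature hypothesis is an \emph{averaged} one and survives on the cover only through the non-trivial estimate \eqref{univ-rho}; this is what forces the radius restriction $R\ge 3d$ in Theorem~\ref{univ-cover} and makes the small-radius regime require a separate local-isometry argument. Gluing the two regimes and then unwinding the scaling normalization so that every packing constant depends only on $n,p,H,R,\varepsilon$ is the technical core; the remainder is the standard Gromov scheme.
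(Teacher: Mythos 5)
You should note at the outset that the paper itself gives no proof of Theorem~\ref{compact}: it is quoted from \cite{PW1,Au2}, and your overall scheme --- Gromov's uniform total boundedness criterion, a packing argument driven by \eqref{rel-vol-3}, and Aubry's estimate \eqref{univ-rho} to control the covers --- is indeed the route of those references. However, one step of your treatment of the covers is genuinely wrong: the assertion that on balls of radius $<3d$ ``the bound is inherited directly from $M_i$ by local isometry.'' A local isometry preserves $\rho_H$ pointwise, but not ball averages: writing $N(y)=\#\left(\pi_i^{-1}(y)\cap B_r(\tilde x_i)\right)$, one has
$$-\kern-1em\int_{B_r(\tilde x_i)}\rho_H^p\;=\;\frac{\int_{B_r(x_i)}N\,\rho_H^p\,dy}{\int_{B_r(x_i)}N\,dy},$$
a multiplicity-weighted average that can vastly exceed $-\kern-1em\int_{B_r(x_i)}\rho_H^p$ when the sheets of the cover pile up exactly where $\rho_H$ concentrates. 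This is the whole reason Theorem~\ref{univ-cover} is nontrivial and carries the restriction $R\ge 3d$ --- a point you yourself make one sentence later, in contradiction with this step. The gap is repairable, but not this way: the hypothesis of \eqref{rel-vol-2}--\eqref{rel-vol-3} is a bound on $\bar k(H,p,1)$ at the \emph{single} scale $1$, while the conclusion compares balls of all radii $0<r<R$, arbitrarily small ones included; hence Aubry's bound at scale $\max\{1,3d\}$ (after a fixed rescaling when $3d>1$) already feeds the packing argument on $\tilde M_i$ at every scale, and no small-radius curvature control on the cover is needed at all.

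Your opening reduction also has a real problem. For $H\neq 0$ the quantity $\op{diam}(M)^2\bar k(H,p)$ is \emph{not} scale invariant: normalizing $\op{diam}=1$ replaces $H$ by $H_i'=\op{diam}(M_i)^2H$, so $\delta_0(n,p,H_i')$ and $c(n,p,H_i')$ in Theorem~\ref{com-int}, as well as the constant in Theorem~\ref{univ-cover} (which moreover needs a diameter bound $d$ that the statement never supplies), all vary with $i$; and if $\op{diam}(M_i)\to\infty$, precompactness of the unit-diameter rescalings says nothing about the original pointed sequence anyway. In fact, without an implicit uniform diameter bound the quoted statement is false: one can concentrate curvature $\sim -K_i$ on a unit ball about $x_i$ and dilute the average over a tame region of diameter $\sim 2\sqrt{K_i}$ and volume $\sim e^{2(n-1)\sqrt{K_i}}$, so that $\op{diam}(M_i)^2\,\bar k(-1,p)\to 0$ while the covering numbers of $B_1(x_i)$ by $\tfrac12$-balls diverge, destroying pointed precompactness. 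So your proof --- like the theorem as quoted --- must be read with the additional hypothesis $\op{diam}(M_i)\le d$ and with constants allowed to depend on $d$; that is the form in which the result appears in \cite{PW1} and the form in which it is actually used later in this paper, where $\op{diam}(M_i)\le d$ and $\bar k_i(-1,p)\le\delta_i\to 0$.
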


\section{Proofs of the main results}

\subsection{Warped product structure}
In this subsection, we will prove the following almost warped product rigidity for manifolds with lower integral Ricci curvature (compared with \cite[Theorem 1.4]{CRX}).

\begin{Thm} \label{key-lem}
Given $p>\frac{n}2, d>0$, there are $\delta_0=\delta_0(n, p, d)$, $\epsilon_0=\epsilon_0(n, p, d)$, such that for $\delta<\delta_0, \epsilon<\epsilon_0$, if a compact Riemannian $n$-manifold $M$ satisfies that 
$$\bar k(-1, p)\leq \delta, \quad h(M)\geq n-1-\epsilon, \quad \op{diam}(M)\leq d,$$
then for each $D>8d$,
there are $L_i\to \infty$ large, such that for any $\rho\in (2d, {D}/{4})$, there are disjoint metric balls,
$B_{\rho}(q^i_j)\subset A_{L_i, L_i+D}(\tilde x)$, such that 
$$d_{GH}\left(B_{\rho}\left(q^i_j\right),
	B_{\rho}\left((0,y)\right)\right)\leq \Psi\left(\delta, \epsilon, L_i^{-1} | n, d, p, D\right),$$
	where $B_{\rho}((0,y))$ is a metric ball in a warped
	product space $\Bbb R^1\times_{e^s} Y$, $Y$ is a connected length space. And 
	\begin{equation}\frac{\volume(\cup_jB_{\rho}(q^i_j))}{\volume(A_{L_i, L_i+D}(\tilde x))} \geq c(n, p, d, D)>0. \label{max}\end{equation}
\end{Thm}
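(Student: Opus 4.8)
The plan is to let the almost maximal volume entropy hypothesis locate, at a sequence of large radii $L_i\to\infty$, annuli $A_{L_i,L_i+D}(\tilde x)$ on which the relative volume ratio $\volume(B_R(\tilde x))/\svolball{-1}{R}$ is nearly constant, and then to feed this almost-maximal-volume information into the integral-Ricci almost metric cone/warped product rigidity to extract the structure $\Bbb R^1\times_{e^s}Y$.

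First I would pass the integral Ricci bound to the universal cover by Theorem~\ref{univ-cover}, so that the relative volume comparison \eqref{rel-vol-2}--\eqref{rel-vol-3} is available on $\tilde M$ for all $R\geq 3d$. Writing $V(R)=\volume(B_R(\tilde x))$ and $\underline V(R)=\svolball{-1}{R}\sim c_n e^{(n-1)R}$, comparison \eqref{rel-vol-3} says that $f(R):=\ln(V(R)/\underline V(R))$ has increments bounded above, $f(R)-f(r)\leq (R-r)c\delta^{1/2}$ for $1\leq r<R$ (so $f$ is almost non-increasing), while the definition of $h(M)$ together with $h(M)\geq n-1-\epsilon$ forces $f(R)/R\to h(M)-(n-1)\geq -\epsilon$. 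A function that is almost non-increasing yet whose asymptotic average slope is at least $-\epsilon$ cannot decrease by much on most windows of width $D$: partitioning $[0,T]$ into $\sim T/D$ intervals, each increment $f((k+1)D)-f(kD)$ is at most $c\delta^{1/2}D$ while their average is at least $-\epsilon D-o_T(1)$, so a reverse-Markov argument produces $L_i\to\infty$ with $|f(L_i+D)-f(L_i)|\leq \Psi(\delta,\epsilon,L_i^{-1}\,|\,n,d,p,D)$. This is precisely the almost-constant-volume-ratio statement, and it is the substitute for the global volume comparison between two far-apart annuli that is \emph{unavailable} in the integral setting; this is where the entropy hypothesis does the real work (the estimates flagged in the introduction as \eqref{alm-vol-1}, \eqref{alm-vol-2} and Claim~1).

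Next, localizing inside the fixed-width annulus $A_{L_i,L_i+D}(\tilde x)$ keeps the comparison error in \eqref{rel-vol-2} bounded by $e^{Dc\delta^{1/2}}\approx 1$, so the near-equality of the ratio at the two endpoints propagates to give that the volume of $A_{L_i,L_i+D}(\tilde x)$, and of small sub-balls $B_\rho(q)$ centered in it, match their hyperbolic model values up to $\Psi(\delta,\epsilon,L_i^{-1})$. Since $L_i\to\infty$ makes the model value $\underline{\Delta}_{-1}r=(n-1)\coth r\to n-1$, the Laplacian comparison \eqref{lap-com} forces $\Delta r\approx n-1$ in the $L^1$ sense on the annulus, i.e. $r=d(\tilde x,\cdot)$ behaves like the radial coordinate of the exponential warped product, for which $\Hess r=g-\dif r\otimes\dif r$ and $\Delta r\equiv n-1$. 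Feeding the almost-maximal volume together with this almost-harmonicity of $r$ into the integral-Ricci almost metric cone/warped product rigidity (\cite{PW2,TZ,Ch2}, in the form parallel to \cite[Theorem 1.4]{CRX}) yields that each $B_\rho(q)$ is $\Psi(\delta,\epsilon,L_i^{-1}\,|\,n,d,p,D)$-Gromov--Hausdorff close to a ball $B_\rho((0,y))$ in some $\Bbb R^1\times_{e^s}Y$ with $Y$ a connected length space; connectedness of $Y$ should follow from $\rho>2d$ exceeding the scale of the deck-transformation orbits.

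Finally, for the covering estimate \eqref{max} I would run a Vitali argument: choose a maximal $\rho$-separated family $\{q^i_j\}$ in $A_{L_i,L_i+D}(\tilde x)$ so that the $B_\rho(q^i_j)$ are disjoint while $\{B_{2\rho}(q^i_j)\}$ covers the annulus, and then use \eqref{rel-vol-2}--\eqref{rel-vol-3} within the enlarged annulus to bound $\volume(B_{2\rho}(q^i_j))/\volume(B_\rho(q^i_j))$ by a constant $c(n,p,d,D)$, whence $\sum_j\volume(B_\rho(q^i_j))\geq c^{-1}\volume(A_{L_i,L_i+D}(\tilde x))$. The main obstacle is the third step: making the passage from almost-maximal volume to the rigid warped structure effective in the integral curvature setting, where one must localize carefully to the fixed-width annulus so that the otherwise exponentially large errors $e^{(\max\{R,1\}-r)c\delta^{1/2}}$ stay bounded, and must verify that the limiting warping is exactly $e^s$ rather than a general positive function --- this being forced by letting $L_i\to\infty$ so that the curvature-$(-1)$ model Laplacian $(n-1)\coth r$ flattens to the constant $n-1$.
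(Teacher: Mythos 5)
Your high-level skeleton does match the paper's: pass the curvature bound to the universal cover via Theorem~\ref{univ-cover}, use the entropy hypothesis together with almost-monotonicity of volume ratios to locate good annuli $A_{L_i,L_i+D}(\tilde x)$, derive $L^1$ control of $\Delta r-(n-1)$ there, invoke a Cheeger--Colding-type warped product theorem, and finish with a covering argument for \eqref{max}; and your pigeonhole argument producing the windows $L_i$ is a legitimate variant of the paper's appeal to \cite[Lemma 4.2]{CRX}. The fatal problem is the step you yourself flag as ``the main obstacle'': you feed the volume and Laplacian information into ``the integral-Ricci almost metric cone/warped product rigidity (\cite{PW2,TZ,Ch2}, parallel to \cite[Theorem 1.4]{CRX})'' as a black box, and no such black box exists in those references. \cite[Theorem 3.6]{CC1} requires a pointwise lower Ricci bound, while the integral-curvature rigidity results of \cite{PW2, TZ, Ch2} are almost splitting and almost metric cone theorems, not almost warped product theorems with warping $e^s$. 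Producing the needed statement in the integral setting is exactly the content of the paper's Theorem~\ref{warp-pro}: one must verify the function-theoretic hypotheses of \cite[Theorem 5.1]{CRX}, namely (i) that points of the chosen balls lie, up to an error $\Psi$, on minimal geodesics from $\tilde x$ reaching $\partial B_{L+D}(\tilde x)$, and (ii1)--(ii3) the existence of $\tilde f$ close to $e^u$ with $\op{Hess}\tilde f$ close to $e^u g$ in $L^2$, built by solving $\Delta\tilde f=ne^u$ on each good ball and controlled by the maximum principle, gradient estimate, segment inequality, Bochner formula and cut-off functions for integral Ricci curvature \cite{DWZ, Ch2}. None of this appears in your proposal; in particular the direction condition (i) --- the paper's Claim 3, proved from \eqref{rel-vol-1} and \eqref{alm-vol-1} --- is not a consequence of the $L^1$ bound on $\Delta r-(n-1)$, and without it one cannot split off the $\Bbb R$-factor.

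There is a second, related gap: you assert the warped-product approximation for \emph{every} ball $B_\rho(q)$ centered in the annulus and then, independently, choose a Vitali family to obtain \eqref{max}. But the $L^1$ bound on $|\Delta r-(n-1)|$ over the whole annulus only yields the corresponding bound on \emph{most} balls in the measure sense (and likewise for the geodesic condition); this is why the paper restricts to the subfamilies $E'(\eta)$, $E''(\eta)$ and must then show, via \eqref{F1-large}, \eqref{F2-large} and Claim 2, that these good balls still occupy a definite fraction of the annulus. Moreover, in your Vitali step the doubled balls cover only the inner annulus $A_{L+\rho,L+D-\rho}(\tilde x)$, and bounding $\volume(A_{L,L+D}(\tilde x))$ by a multiple of $\volume(A_{L+\rho,L+D-\rho}(\tilde x))$ is precisely the kind of annulus-to-annulus comparison that is unavailable for integral curvature; the paper instead gets this from the entropy bound \eqref{alm-vol-1}, as in \eqref{G-large}. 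So even the covering estimate \eqref{max} genuinely needs the entropy hypothesis, not just \eqref{rel-vol-2}--\eqref{rel-vol-3}.
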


As the discussion in \cite{CRX}, by \cite[Theorem 3.6]{CC1} (see also \cite[Theorem 5.1]{CRX} where the condition $\op{Ric}_M\geq (n-1)H$ can be replaced by the condition that Segment inequality (see \cite{Ch2} for integral Ricci curvature) and local doubling property \eqref{rel-vol-3} hold), to prove Theorem~\ref{key-lem}, we only need to show that

\begin{Thm} \label{warp-pro}
	Let the assumptions be as in Theorem~\ref{key-lem}. For each $D>8d$, there are $L_i\to \infty$ large, such that for $\rho\in (2d, D/4)$, $0<\alpha<1$, there are disjoint metric balls,
	$B_{\rho}(q^i_j)\subset A_{L_i, L_i+D}(\tilde x)$, satisfying \eqref{max} and 
	
	\noindent (i) for $y\in B_\rho(q^i_j)$, there is $z\in \partial B_{L_i+ D}(\tilde x)$
	satisfying $|\tilde x y|+ |yz|\leq L_i+D+ \Psi(\delta, \epsilon, L_i^{-1} |n, p, d,D)$;
	
	\noindent (ii) for each $q^i_j$, let $u(y)=|y \tilde x|-|q^i_j \tilde x|$, there is a
	smooth function $\tilde f$ satisfying
	
	(ii1) $|\tilde f-e^u|<\Psi(\delta, \epsilon, L_i^{-1} |n, p, D,d)$ for all $x\in B_{(1-\alpha)\rho}(q^i_j)$.
	
	(ii2) $-\kern-1em\int_{B_\rho(q^i_j)}|\nabla \tilde f-\nabla e^u|^2\le
	\Psi(\delta, \epsilon, L_i^{-1} |n,p, D,d)$.
	
	(ii3) $-\kern-1em\int_{B_{(1-\alpha)\rho}(q^i_j)}|\op{Hess}\tilde f-e^u|^2
	\le \Psi(\delta, \epsilon, L_i^{-1}|n,p,D,\alpha,d)$.

\end{Thm}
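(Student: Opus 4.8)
The plan is to follow the Cheeger--Colding scheme used for the smooth case in \cite{CRX}, replacing every pointwise Ricci bound by its integral counterpart from Theorem~\ref{com-int}, and to compensate for the absence of an effective large-annulus volume comparison by extracting sharp volume information directly from the entropy hypothesis. \textbf{First} I would convert $h(M)\ge n-1-\epsilon$ into almost-sharp volume growth on the universal cover. Writing $V(R)=\volume(B_R(\tilde x))$ and $\underline V(R)=\svolball{-1}{R}$, the comparison \eqref{rel-vol-3}, applied on $\tilde M$ whose integral curvature is controlled by Theorem~\ref{univ-cover}, shows that $\theta(R):=\ln V(R)-\ln\underline V(R)-c\delta^{1/2}R$ is non-increasing for $R\ge1$, while the definition of $h(M)$ together with the upper bound $h(M)\le n-1+c\delta^{1/2}$ of \cite{CW} gives $\big(\ln V(R)-\ln\underline V(R)\big)/R\to h(M)-(n-1)\in[-\epsilon,c\delta^{1/2}]$. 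A non-increasing function with asymptotically small slope decreases only by $\Psi(\delta,\epsilon\,|\,n,d,p,D)$ over most windows of length $D$, so a pigeonhole argument produces $L_i\to\infty$ along which $\theta(L_i)-\theta(L_i+D)$ is this small; this yields the almost-equalities in \eqref{rel-vol-2} and \eqref{rel-vol-3} on $A_{L_i,L_i+D}(\tilde x)$ that in the smooth case come for free from Bishop--Gromov monotonicity.

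\textbf{Second}, on such a good annulus I would produce the disjoint balls and \eqref{max}. A Vitali covering of $A_{L_i,L_i+D}(\tilde x)$ by $\rho$-balls, together with the local doubling bound \eqref{rel-vol-3} whose error over the fixed window $D$ is $\Psi(\delta\,|\,n,d,p,D)$, gives disjoint $B_\rho(q^i_j)$ occupying a definite fraction $c(n,p,d,D)$ of the annular volume. Property (i), that almost every such point lies on an almost-minimal ray reaching $\partial B_{L_i+D}(\tilde x)$, follows from an excess estimate: points with large excess $|\tilde x y|+|yz|-(L_i+D)$ fill only a $\Psi$-fraction of the volume, since otherwise the almost-sharp annular comparison of Step~1 would be violated. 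Here I would use the segment inequality for integral Ricci curvature (\cite{Ch2}) to pass from the integrand $\psi$ to integral bounds along geodesics; discarding the $\Psi$-fraction of bad balls preserves \eqref{max}.

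\textbf{Third} comes the analytic heart, property (ii): constructing $\tilde f\approx e^u$ with $\Hess\tilde f\approx e^u\,\g$ in $L^2$. The model computation $\Hess(e^b)=e^b\big(\dif b\otimes\dif b+\coth b\,(\g-\dif b\otimes\dif b)\big)$, in which $\coth b\to1$ as $b\to\infty$, is exactly what forces the rescaling $L_i\to\infty$: only in the limit does $\Hess(e^b)=e^b\g$ hold even in $\Bbb H^n$. I would first show $\Delta e^u\approx n\,e^u$ in $L^1$ on $B_\rho(q^i_j)$, where $b=|\cdot\,\tilde x|$ and $u=b-|q^i_j\tilde x|$: the Laplacian comparison \eqref{lap-com} bounds $\Delta b-(n-1)\coth b$ from above in $L^1$ by $c(n,p)\bar k^{1/2}$, while the almost-sharp volume of Step~1 forces the complementary lower bound, so $\Delta b\approx n-1$ and hence $\Delta e^u=e^u(\Delta b+1)\approx n\,e^u$. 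Taking $\tilde f$ to be a smooth solution of a Poisson problem approximating $e^u$ (or a mollification of $e^b$ off the cut locus, whose effect is controlled by the segment inequality) then gives (ii1) and (ii2) from gradient estimates and the $L^1$ Laplacian control.

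Finally I would integrate the Bochner formula $\tfrac12\Delta|\nabla\tilde f|^2=|\Hess\tilde f|^2+\langle\nabla\tilde f,\nabla\Delta\tilde f\rangle+\Ric(\nabla\tilde f,\nabla\tilde f)$ against a cutoff supported in $B_\rho(q^i_j)$: the Ricci term is controlled by the pointwise bound $\Ric\ge-(n-1)-\rho_{-1}$ and the integral smallness of $\rho_{-1}$ from Theorem~\ref{univ-cover} via H\"older's inequality, the gradient term by $\Delta\tilde f\approx n\tilde f$, and the Laplacian term by integration by parts. These bound the traceless Hessian $-\kern-1em\int_{B_{(1-\alpha)\rho}(q^i_j)}\big|\Hess\tilde f-\tfrac{\Delta\tilde f}{n}\g\big|^2=-\kern-1em\int\big(|\Hess\tilde f|^2-\tfrac{(\Delta\tilde f)^2}{n}\big)$ by $\Psi$, and combined with $\Delta\tilde f\approx n\tilde f$ this is (ii3). \textbf{The main obstacle} is Step~1 and its propagation: in the integral setting \eqref{rel-vol-3} degrades like $e^{(R-r)c\delta^{1/2}}$ over the full radius, so one cannot directly compare $A_{L_i,L_i+D}(\tilde x)$ to the hyperbolic model as in \cite{CRX}. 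The entire difficulty is to show that the \emph{asymptotic} entropy hypothesis nevertheless pins down the \emph{local}, window-scale volume geometry sharply enough to drive the Bochner pinching; this is precisely the content of the almost-volume estimates and Claims~1--3 that the proof must establish.
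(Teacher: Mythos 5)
Your proposal is correct and takes essentially the same route as the paper's proof: entropy is converted into window-scale almost-maximal volume control (the paper's \eqref{alm-vol-1}, \eqref{alm-vol-2}, your pigeonhole on the monotone quantity coming from \eqref{rel-vol-3}), a maximal family of disjoint $\rho$-balls with \eqref{max} is selected via doubling, the $L^1$ pinching $-\kern-1em\int|\Delta r-(n-1)|\leq\Psi$ together with the almost-full-measure ray set gives (i), and the Poisson problem $\Delta\tilde f=ne^u$ handled by the \cite{DWZ} maximum principle, gradient estimate, segment inequality, and Bochner formula with a cutoff gives (ii1)--(ii3). The only cosmetic difference is that the paper imports the sphere-level ratios \eqref{alm-vol-1}, \eqref{alm-vol-2} from \cite[Lemma 4.2]{CRX} instead of running your ball-volume pigeonhole, and it fixes the order (maximum principle, then (ii2) by parts, then (ii1) via the segment inequality) that your sketch leaves implicit.
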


Consider a compact $n$-manifold $M$ as in Theorem~\ref{key-lem}. By the definition of volume entropy, we know that $h(M)\geq n-1-\epsilon$ implies that there is $R_0>0$ such that  for $R>R_0$,
 \begin{equation}\frac{\volume(\partial B_R(\tilde x))}{\volume(B_R(\tilde x))}\geq n-1-\epsilon. \label{alm-vol-1}\end{equation}
 And by \cite[Lemma 4.2]{CRX}, for each $D>8d$, there are  $L_i\to \infty$ such that for $i>i_0$
 \begin{equation}
 \frac{\volume(\partial B_{L_i+D}(\tilde x))}{\volume(\partial B_{L_i}(\tilde x))}\geq e^{(n-1-\epsilon)D}-\epsilon. \label{alm-vol-2}
 \end{equation}
 
 By \eqref{univ-rho}, if $\bar k(-1, p)\leq \delta<\delta(n, p, d)$, for any $R>3d$, in the universal cover of $M$, $\tilde M$,
 \begin{equation}
 \bar k_{\tilde M}(-1, p, R)\leq c(n, p, d)\bar k(-1, p)\leq c(n, p, d)\delta. \label{uni-cur}
 \end{equation}

 \begin{proof}[Proof of Theorem~\ref{warp-pro}]
 
 Let $L_i\to \infty$, $i\geq i_0$, be as in \eqref{alm-vol-2}.  Take $\delta_0$ as in (2.1.2) and Theorem~\ref{univ-cover}.
 
 Claim 1: 
 $$ -\kern-1em\int_{A_{L_i, L_i+D}(\tilde x)} \left| \Delta r - (n-1)\right|\leq \Psi(\delta, \epsilon, L_i^{-1} | n, p, d, D).$$
 
  By \eqref{alm-vol-1}, we have that for  $L>R_0$ and $D>0$ fixed,
 \begin{eqnarray*}
 \volume(A_{L, L+D}(\tilde x))&= & \int_L^{L+D}\volume(\partial B_{R}(\tilde x)) dR\\
 &\geq & \int_L^{L+D}(n-1-\epsilon)\volume(B_R(\tilde x))dR\geq \int_L^{L+D}(n-1-\epsilon)\volume(B_L(\tilde x))dR \\
 &=& (n-1-\epsilon)D\volume(B_L(\tilde x)).
 \end{eqnarray*}
 Thus
 \begin{equation}
 \frac{\volume(B_{L+D}(\tilde x))}{\volume(A_{L, L+D}(\tilde x))}=1+\frac{\volume(B_L(\tilde x))}{\volume(A_{L, L+D}(\tilde x))}
 \leq  1+ \frac1{(n-1-\epsilon)D}. \label{abgeq}
 \end{equation}
  
 By Laplacian  comparison \eqref{lap-com}, \eqref{uni-cur} and \eqref{abgeq},
 \begin{eqnarray*}
 -\kern-1em\int_{A_{L, L+D}(\tilde x)}  \Delta r &\leq & -\kern-1em\int_{A_{L, L+D}(\tilde x)}  \underline{\Delta} r +\psi\\
 &= & -\kern-1em\int_{A_{L, L+D}(\tilde x)}  \underline{\Delta} r +\frac{\volume(B_{L+D}(\tilde x))}{\volume(A_{L, L+D}(\tilde x))} -\kern-1em\int_{B_{L+D}(\tilde x)}\psi\\
 &\leq &-\kern-1em\int_{A_{L, L+D}(\tilde x)}  \underline{\Delta} r +\left(1+\frac1{(n-1-\epsilon)D}\right) -\kern-1em\int_{B_{L+D}(\tilde x)}\psi\\
 & \leq & n-1+ \Psi(\delta, L^{-1} | n, p, d, D),
 \end{eqnarray*}
where in the last inequality we used
$$\underline{\Delta}r=(n-1)\frac{\cosh r}{\sinh r}\to n-1, \text{ as } r\to \infty.$$ 
 
 On the other hand,  for $L>\max\{L_{i_0}, R_0\}$, $L=L_i$, by \eqref{alm-vol-1}, \eqref{alm-vol-2}, \eqref{rel-vol-2}
 \begin{eqnarray*}
 -\kern-1em\int_{A_{L, L+D}(\tilde x)}  \Delta r &= &  \frac{\volume(\partial B_{L+D}(\tilde x))-\volume(\partial B_L(\tilde x))}{\volume(A_{L, L+D}(\tilde x))}\\
 & \geq & \left(e^{(n-1-\epsilon)D}-1-\epsilon\right)\frac{\volume(\partial B_L(\tilde x))}{\volume(A_{L, L+D}(\tilde x))}\\
  & = & \left(e^{(n-1-\epsilon)D}-1-\epsilon\right)\frac{\volume(\partial B_L(\tilde x))}{\volume(B_L(\tilde x))}\frac{\volume(B_L(\tilde x))}{\volume(A_{L, L+D}(\tilde x))}\\
  & \geq & \left(e^{(n-1-\epsilon)D}-1-\epsilon\right)(n-1-\epsilon)\left(1+\left(e^{Dc\delta^{\frac12}}-1\right)\frac{\svolball{-1}{L+D}}{\svolann{-1}{L, L+D}}\right)^{-1}\frac{\svolball{-1}{L}}{\svolann{-1}{L, L+D}}\\
  &\geq & n-1-\Psi(\delta, \epsilon, L^{-1} | n, p, d, D),
 \end{eqnarray*}
 where in the last inequality we used
 $$\lim_{L\to \infty}\frac{\svolball{-1}{L}}{\svolann{-1}{L, L+D}}=\frac1{e^{(n-1)D}-1}.$$
 
 As in \cite{CRX},
let $E$ be a maximal subset of $\{q_i, \,B_{\rho}(q_i)\subset A_{L, L+D}(\tilde x)\}$
such that for all $q_{i_1}\neq q_{i_2} \in E$, $B_{\rho}(q_{i_1})\cap B_{\rho}(q_{i_2})=\emptyset$.
Let $F=\bigcup_{q_i\in E}B_{\rho}(q_i)$.  

Claim 2:  There is $C(n, p, d, D)>0$ such  that
\begin{equation}\frac{\volume(F)}{\volume(A_{L, L+D}(\tilde x))}\geq C(n, p, d, D). \label{F-large}\end{equation}

Let $G=\bigcup_{q_i\in E}B_{2\rho}(q_i)$. 
By the maximality of $E$, $A_{L+\rho, L+D-\rho}(\tilde x) \subset G$.	
Then by \eqref{alm-vol-1}, \eqref{rel-vol-2}
\begin{eqnarray}
\frac{\volume(G)}{\volume(A_{L, L+D}(\tilde x))} &\geq & \frac{\volume(A_{L+\rho, L+D-\rho}(\tilde x))}{\volume(A_{L, L+D}(\tilde x))} \nonumber\\
& = & \frac{\int_{L+\rho}^{L+D-\rho}\volume(\partial B_R(\tilde x))dR}{\volume(A_{L, L+D}(\tilde x))} \nonumber \\
&\geq &\frac{(n-1-\epsilon)(D-2\rho)\volume(B_L(\tilde x))}{\volume(A_{L, L+D}(\tilde x))} \nonumber \\
&\geq & (n-1-\epsilon)(D-2\rho)\left(1+\left(e^{Dc\delta^{\frac12}}-1\right)\frac{\svolball{-1}{L+D}}{\svolann{-1}{L, L+D}}\right)^{-1}\frac{\svolball{-1}{L}}{\svolann{-1}{L, L+D}}. \nonumber \\
&\geq & C(n, p, d, D). \label{G-large}
\end{eqnarray}
By \eqref{rel-vol-3},
$$\frac{\volume(B_{2\rho}(x))}{\svolball{-1}{2\rho}}\leq e^{DC\delta^{1/2}_0} \frac{\volume(B_{\rho}(x))}{\svolball{-1}{\rho}}.$$
And thus
\begin{equation}\frac{\volume(F)}{\volume(G)}\geq \frac{\sum_{q_i\in E} \volume(B_{\rho}(q_i))}{\sum_{q_i\in E}
		\volume(B_{2\rho}(q_i))}\geq e^{-DC\delta_0^{1/2}}\frac{\svolball{-1}{\rho}}{\svolball{-1}{2\rho}}. \label{FG-large}\end{equation}
		
Then \eqref{F-large} is derived  by \eqref{G-large} and \eqref{FG-large}.

Let 
$$S=\{y\in B_{L+D}(\tilde x),\, \exists z\in \partial B_{L+D}(\tilde x), d(y, \tilde x)+d(z, y)=L+D\}.$$ 

Claim 3:
\begin{equation}
\frac{\volume(S\cap A_{L, L+D}(\tilde x))}{\volume(A_{L, L+D}(\tilde x))}\geq 1-\Psi(\epsilon, \delta, L^{-1} | n, p, D, d). \label{pass-thr}
\end{equation}

By \eqref{rel-vol-1}, \eqref{alm-vol-1}
\begin{eqnarray*}
\frac{\volume(S)}{\svolball{-1}{L+D}} & \geq & \left(1+c\delta^{\frac12}\frac{\svolball{-1}{L+D}}{\svolsp{-1}{L+D}}\right)^{-1}\frac{\volume(\partial B_{L+D}(\tilde x))}{\svolsp{-1}{L+D}}\\
& \geq & \left(1+c\delta^{\frac12}\frac{\svolball{-1}{L+D}}{\svolsp{-1}{L+D}}\right)^{-1}(n-1-\epsilon)\frac{\volume(B_{L+D}(\tilde x))}{\svolsp{-1}{L+D}},
\end{eqnarray*}
i.e.,
\begin{equation}
\frac{\volume(S)}{\volume(B_{L+D}(\tilde x))}\geq  \left(1+c\delta^{\frac12}\frac{\svolball{-1}{L+D}}{\svolsp{-1}{L+D}}\right)^{-1}(n-1-\epsilon)\frac{\svolball{-1}{L+D}}{\svolsp{-1}{L+D}}.
\end{equation}
Since 
$$\lim_{L\to \infty}\frac{\svolball{-1}{L+D}}{\svolsp{-1}{L+D}}=\frac1{n-1},$$
we know that for $L$ large, 
$$\frac{\volume(S)}{\volume(B_{L+D}(\tilde x))}\geq 1-\Psi(\delta, \epsilon, L^{-1} | n, D, p, d).$$

Note that by \eqref{alm-vol-1}, \eqref{rel-vol-3},
\begin{eqnarray*}
\frac{\volume(A_{L, L+D}(\tilde x))}{\volume(B_{L+D}(\tilde x))} & = &\frac{\int_L^{L+D}\volume(\partial B_R(\tilde x)) dR}{\volume(B_{L+D}(\tilde x))}\\
&\geq &\frac{\int_L^{L+D}(n-1-\epsilon)\volume(B_R(\tilde x)) dR}{\volume(B_{L+D}(\tilde x))}\\
&\geq & \frac{(n-1-\epsilon)D\volume(B_{L}(\tilde x))}{\volume(B_{L+D}(\tilde x))}\\
&\geq & (n-1-\epsilon)De^{-Dc\delta^{\frac12}}\frac{\svolball{-1}{L}}{\svolball{-1}{L+D}}.
\end{eqnarray*}
Thus \eqref{pass-thr} holds.

By Claim 2 and Claim 3, we have that 
\begin{equation}
\frac{\volume(S\cap F)}{\volume(F)}\geq 1-\Psi(\epsilon, \delta, L^{-1} | n, D, p,d).  \label{pass-thr-2}
\end{equation}

Take $0<\eta<1$ (which will be specified later) and let
$$E'(\eta)=\left\{q_i\in E,\,\,\, \;\frac{\volume(B_\rho(q_i)\setminus S)}{\volume(B_\rho(q_i))}<\eta\right\},$$
 $F'(\eta)=\bigcup_{q_i\in E'(\eta)}B_{\rho}(q_i)$.
 Then
\begin{equation}\frac{\volume(F'(\eta))}{\volume(F)}\ge1-\eta^{-1}\Psi(\delta,\epsilon, L^{-1}|n, D, p, d). \label{F1-large}\end{equation}

In fact, by \eqref{pass-thr-2}
$$\Psi(\epsilon, \delta, L^{-1} | n, D, p, d)\volume(F)\geq \volume(F\setminus S)\geq \volume((F\setminus F'(\eta))\setminus S)\geq \eta\volume(F\setminus F'(\eta)).$$

Now as the discussion of \cite[Lemma 5.7, 5.8]{CRX}, by Claim 1 and Claim 2, we can take $\eta=\Psi^{\frac12}(\delta, \epsilon, L^{-1} | n, p, D, d)$ such that for
$$E''(\eta)=\left\{q_i\in E,\,\, -\kern-1em\int_{B_\rho(q_i)} \left|\Delta r-(n-1)\right|<\eta^{-1}\Psi(\delta, \epsilon,L^{-1}|n,p,D, d)\right\},$$
and  $F''(\eta)=\bigcup_{q_i\in E''(\eta)}B_{\rho}(q_i)$ we have that
\begin{equation}\frac{\volume(F''(\eta))}{\volume (F)}\ge 1-\eta. \label{F2-large}\end{equation}

Take $q\in E'(\eta)\cap E''(\eta)$. Then by the definition of $E'(\eta)$ and doubling property \eqref{rel-vol-3},  (i) holds for any $y\in B_{\rho}(q)$ and 
\begin{equation}-\kern-1em\int_{B_\rho(q)} \left|\Delta r-(n-1)\right|<\Psi(\delta, \epsilon,L^{-1}|n,p,D, d).\label{lap-ball}\end{equation}

Let $f=e^u, u(y)=|\tilde xy|-|\tilde xq|$ and let  $\tilde f$ be the solution of
$$\begin{cases}\Delta \tilde f = ne^u, & \text{in } B_\rho(q);\\
\tilde f =f, &\text{on } \partial B_\rho(q).\end{cases}$$
Then as in \cite[Lemma 5.9]{CRX}, by \eqref{lap-ball}, we have that
\begin{eqnarray}
	-\kern-1em\int_{B_{\rho}(q)\setminus C_p}|\Delta (\tilde f-e^u)|&=&-\kern-1em\int_{B_{\rho}(q)\setminus C_p} |n e^u - e^u(|\nabla u|^2 + \Delta u)| \nonumber\\
	&=& -\kern-1em\int_{B_{\rho}(q)\setminus C_p}e^u|n-1-\Delta u| \nonumber\\
	&\leq & \Psi(\delta, \epsilon, L^{-1}|n,p,D ,d). \label{lap-f}
	\end{eqnarray}
By a standard argument as in \cite[Lemma 5.9]{CRX}, we have (ii1)-(ii3). Namely, Maximal principle \cite{DWZ} (cf. \cite[Theorem 2.4]{Ch2}) and \eqref{lap-f} implies $|\tilde f-f|\leq c(n, p, D, d)$. Then integral by parts gives (ii2). By Gradient estimates \cite{DWZ} (cf. \cite[Theorem 2.5]{Ch2}) and (ii2) we have that  $|\nabla f-\nabla\tilde f|\leq c(n, p, D, d)$ which implies (ii1) by Segment inequality \cite{Ch2}. Finally, Bochner's formula, cut-off function \cite{DWZ} (cf. \cite[Lemma 2.8]{Ch2}) and (ii1), (ii2) gives (ii3).

And \eqref{max} follows by \eqref{F1-large}, \eqref{F2-large} and Claim 2.
\end{proof}

\subsection{Proofs of Theorem 1.1 and 1.2}

By Theorem~\ref{key-lem}, the following proofs of Theorem 1.1 and 1.2 are the same as in \cite{CRX}. Here we will give a rough discussion.

Argue by contradictions.  By the procompactness Theorem~\ref{compact}, assume there is a sequence of $n$-manifolds $M_i$ satisfies:
 \begin{equation}
 \op{diam}(M_i)\leq d, \quad \bar k_i(-1, p)\leq \delta_i, \quad h(M_i)\geq n-1-\epsilon_i
  \end{equation}
 with $\delta_i\to 0$, $\epsilon_i\to 0$ and the following communicate diagram:
 
 \begin{equation}\begin{array}[c]{ccc}
(\tilde M_i,\tilde x_i,\Gamma_i)&\xrightarrow{GH}&(\tilde X,\tilde x,G)\\
\downarrow\scriptstyle{\pi_i}&&\downarrow\scriptstyle{\pi}\\
(M_i,x_i)&\xrightarrow{GH} &(X, x),
\end{array} \end{equation}
 where $\Gamma_i=\pi_1(M_i, x_i)$ the fundamental group of $M_i$. Note that by \cite{Ket} and \cite{Ch2}, $\tilde X$ is a $\RCD(-(n-1), n)$-space. And by \cite{Ch2} or the theory of $\RCD$-spaces (see \cite{GR, So}), $G$ is a Lie group.
 
 To prove Theorem 1.1, we will show that $\tilde X$ is isometric to a simply connected hyperbolic space form $\Bbb H^k$, $k\leq n$. And to get Theorem 1.2, by \cite{PW2}, we only need to show that  if in addition there is $v>0$ such that $\volume(M_i)\geq v$, then $X$ is isometric to a hyperbolic $n$-manifold.

By Theorem~\ref{key-lem}, for any fixed $D>4\rho>8d$, there are $L_j\to \infty$ large, $B_{\rho}(q_j)\subset A_{L_j, L_j+D}(\tilde x_i)$, 
$$d_{GH}\left(B_{\rho}\left(q_j\right),
	B_{\rho}\left((0,y_i)\right)\right)\leq \Psi\left(\delta, \epsilon, L_j^{-1} | n, d, p, D\right),$$ 
where $B_{\rho}((0,y_i))$ is a metric ball in a warped product space $\Bbb R^1\times_{e^s} Y_i$, $Y_i$ is a connected length space.

Since $\op{diam}(M_i)\leq d$, there is $\gamma_{ij}\in \Gamma_i$ such that $B_{\frac{\rho}{2}}(\tilde x_i)\subset B_{\rho}(\gamma_{ij}(q_j))=\gamma_{ij}(B_{\rho}(q_j))$.  Thus
$$d_{GH}\left(B_{\frac{\rho}2}\left(\tilde x_i\right),
	B_{\frac{\rho}2}\left((0,y_i)\right)\right)\leq \Psi\left(\delta, \epsilon, L_j^{-1} | n, d, p, D\right).$$ 
Let $L_j\to \infty$, we have that 	
$$d_{GH}\left(B_{\frac{\rho}2}\left(\tilde{x_i}\right),
	B_{\frac{\rho}2}\left((0,y_i)\right)\right)\leq \Psi\left(\delta, \epsilon | n, d, p, D\right).$$
		
Let $i\to \infty$, we derive that for any  fixed $D>4\rho>8d$ there is a connected length metric space $Y$
	such that
	$$B_{\frac{\rho}2}\left(\tilde{x}\right)= B_{\frac{\rho}2}\left((0,y)\right),$$
	where $B_{\frac{\rho}2}((0,y))$ is a metric ball in a warped
	product space $\Bbb R^1\times_{e^s} Y$, i.e., 
$$\tilde X=\Bbb R\times_{e^s}Y.$$

Now by the regularity of $\tilde X$ (see \cite[Theorem 4.7]{Ch2} or use the regularity of $\RCD$-spaces), we can take $\tilde x$ as a regular point. Then the same argument as the proof of \cite[Lemma 4.4]{CRX} gives that $\tilde X=\Bbb H^k$, $k\leq n$.

If in addition there is $v>0$, such that $\volume(M_i)\geq v$, then $k=n$ and thus the identity component of $G$, $G_0=\{e\}$, i.e., $G$ is discrete. To see $X$ is a hyperbolic manifold, one only need to show $G$ acting freely on $\tilde X$. This fact can be seen by the same argument as in \cite[Theorem 2.1]{CRX} where one needs the volume convergence theorem \cite{PW2} and almost metric cone rigidity and almost splitting theorem  \cite{PW2, TZ, Ch2}.


\begin{thebibliography}{10}
\bibitem{Au1} E. Aubry, Finiteness of $\pi_1$ and geometric inequalities in almost positive Ricci curvature, Ann. Scient. \'Ec. Norm. Sup., 4 s\'erie, t. 40, 2007, 675-695
\bibitem{Au2} E. Aubry, Bounds on the volume entropy and simplicial volume in Ricci curvature $L^p$-bounded from below, Int. Math. Res. No. 10, 2009, 1933-1946


\bibitem{CC1} J. Cheeger; T.H. Colding, Almost rigidity of warped products and the structure of spaces
 with Ricci curvature bounded below, Ann. of Math. (2) 144 (1996), $189-237$. MR 1405949.
 \bibitem{CDNPSW}C.~Connell, X.~Dai, J.~N\'u\~nez-Zimbr\'on, R.~Perales, P.~Su\'arez-Serrato, G.~Wei, Maximal volume entropy rigidity for $\RCD(-(N-1),N)$ spaces, Journal of the London Mathematical Society, 104 (2021), 1615-1681.


\bibitem{Ch1} L. Chen, Quantitative maximal volume entropy rigidity on Alexandrov spaces, to appear in Proceeding of AMS, arxiv:2007.14061
\bibitem{Ch2} L. Chen, Segment inequality and almost rigidity structures for integral Ricci curvature, to appear in IMRS,  arxiv:2010.01269

\bibitem{CRX} L. Chen; X. Rong and S. Xu, Quantitative volume form rigidity under Ricci curvature bound I, J. Diff. Geom., 113 (2019), 227-272
\bibitem{CW} L. Chen; G. Wei, Improved relative volume comparison for integral Ricci curvature and applications to volume entropy, to appear in J. Geometric Analysis, arxiv:1810.05773
\bibitem{DWZ} X. Dai, G. Wei; Z. Zhang, Local sobolev constant estimate for integral Ricci curvature bound, Adva. Math. 325 (2018), 1-33


\bibitem{Gro} M. Gromov, Volume and bounded cohomology, Publications Math. , 56, 1982, 5-99
\bibitem{GR} Guijarro L , Jaime Santos-Rodríguez. On the isometry group of $\RCD^*(K,N)$-spaces[J]. Manuscripta Mathematica, (2016), 1-21.
\bibitem{Ji} Y. Jiang, Maximal bottom of spectrum or volume entropy rigidity in Alexandrov geometry, Math. Z. (2019) 291: 55. https://doi.org/10.1007/s00209-018-2073-6
\bibitem{Ket} C. Ketterer, Stability of metric measure spaces with integral Ricci curvature bounds, J. Functional Analysis, 281 (2021) 1-48.
\bibitem{LW} F. Ledrappier; X. Wang, An integral formula for the volume entropy with application to rigidity, J. Diff. Geom., 85 (2010), 461-477

\bibitem{Ma} A. Manning, Topological entropy for geodesic flows, Ann. of Math. (2)110, 1979, 567-573
\bibitem{PW1} P. Petersen; G. Wei, Relative volume comparison with integral curvature bounds, Geom. Funct. Anal.,  Vol. 7, 1997, 1031-1045
\bibitem{PW2} P. Petersen; G. Wei, Analysis and geometry on manifolds with integral Ricci curvature bounds. II, Trans. Amer. Math. Soc. Volume 353(2), 2000, 457-478
\bibitem{So} Sosa, Gerardo. The Isometry Group of an $\RCD^*(K, N)$ Space is Lie group. Potential Analysis, 49 (2018), 2677-286.
\bibitem{TZ} G. Tian; Z. Zhang, Regularity of Kahler-Ricci flows on Fano manifolds, Acta Math., 216, 2016, 127-176
\end{thebibliography}
\end{document}